\documentclass[11pt,reqno]{amsart}
\usepackage[T1]{fontenc}
\usepackage{lmodern}
\usepackage{amsmath,amssymb,amsthm,mathtools}
\usepackage[margin=1.12in]{geometry}
\usepackage[expansion=false]{microtype}
\usepackage{enumitem}
\usepackage{booktabs}
\usepackage{xcolor}
\usepackage[hypertexnames=false,colorlinks=true,linkcolor=blue!45!black,citecolor=blue!45!black,urlcolor=blue!45!black]{hyperref}
\numberwithin{equation}{section}
\newtheorem{theorem}{Theorem}[section]

\newtheorem{lemma}[theorem]{Lemma}

\theoremstyle{definition}
\newtheorem{definition}[theorem]{Definition}
\newtheorem{example}[theorem]{Example}
\theoremstyle{remark}
\newtheorem{remark}[theorem]{Remark}
\newcommand{\R}{\mathbb R}
\newcommand{\HH}{\mathcal H}
\newcommand{\Reg}{\mathcal R}
\newcommand{\Sing}{\mathcal S}
\newcommand{\Man}{\mathcal U}
\newcommand{\Ric}{\operatorname{Ric}}

\newcommand{\Vol}{\operatorname{Vol}}
\newcommand{\diam}{\operatorname{diam}}
\newcommand{\dist}{\operatorname{dist}}
\newcommand{\dimH}{\dim_{\mathrm H}}
\newcommand{\GH}{\mathrm{pGH}}
\newcommand{\Tube}{\mathcal T}

\setlist[enumerate]{label=\textup{(\roman*)},leftmargin=2.1em}
\title[Volume deficits and codimension-three regularity]{Codimension-three regularity of noncollapsed Ricci limit spaces under an integral volume-deficit bound}
\author{Lingling Kong}
\address[Lingling Kong]{School of Mathematical and statistical,  Northeast Normal University, Changchun, China}
\email{kongll111@nenu.edu.cn}

\date{\today}
\subjclass[2010]{53C23, 53C21, 53C20, 5324}
\keywords{Ricci limit space, volume deficit, noncollapsing, singular set, manifold regularity}

\begin{document}
\begin{abstract}
Let $(X,d,p)$ be a noncollapsed pointed Gromov--Hausdorff limit of complete
$n$-dimensional Riemannian manifolds with a uniform lower Ricci curvature
bound, where $n\ge4$. We assume that, on each bounded ball, the integral
of the $3/2$ power of the small-ball volume deficit relative to the
hyperbolic comparison volume is $O(r^3)$ as $r\rightarrow0$. 
We prove that the metric singular set has Hausdorff dimension
at most $n-3$ and sigma-finite $(n-3)$-dimensional Hausdorff measure, thus confirming a particular case of  codimension-three regularity conjecture \cite[Conjecture 2.4]{Naber2020}. 
\end{abstract}
\maketitle

\section{Introduction and main theorem}

Throughout this paper, a noncollapsed Ricci limit space is a pointed
Gromov--Hausdorff limit
\begin{equation}\label{eq:setting}
 (M_i^n,g_i,p_i)\xrightarrow{\GH}(X,d,p),\qquad
 \Ric_{g_i}\ge -(n-1)g_i,\qquad
 \Vol_{g_i}(B_1(p_i))\ge v>0,
\end{equation}
where the manifolds are connected, complete, and without boundary. We
write $\mu=\HH^n$, with Hausdorff measure normalized to agree with
Riemannian volume, and let $\omega_n$ be the volume of the Euclidean unit
ball. Volume convergence identifies $\mu$ with the limit of the
Riemannian volume measures \cite{Colding1997,CC1997I}.

Cheeger--Colding theory provides a metric regular set $\Reg$, consisting
of points whose tangent cones are all isometric to $\mathbb{R}$, and a metric singular
set $\Sing=X\setminus\Reg$ of Hausdorff dimension at most $n-2$.
It has been conjectured  that every noncollapsed Ricci limit spaces is a topological manifold outside a closed set of Hausdorﬀ codimension at least 4: see \cite[ Conjecture 0.7]{CC1997I}, 
\cite[Remark 10.23]{Cheeger2001} and \cite[Remark 1.19]{CheegerColdingTian2002}.  
The codimension-three formulation
appears in \cite[Conjecture 2.4]{Naber2020}. In this paper, we establish the codimension-three conclusion under an
additional integral bound on small-ball volume deficits. Our estimate
applies to the metric singular set and therefore also yields the
corresponding topological conclusion.

Define the hyperbolic comparison volume and the volume deficit by
\begin{equation}\label{eq:VandD}
 V(r)=n\omega_n\int_0^r(\sinh t)^{n-1}\,dt,
 \qquad
 D_r(x)=1-\frac{\mu(B_r(x))}{V(r)}.
\end{equation}
Here and below $B_r(x)$ is an open metric ball. Bishop--Gromov comparison
implies $0\le D_r(x)\le1$, and $r\mapsto D_r(x)$ is nondecreasing.

\begin{definition}[Integral volume-deficit condition]\label{def:A}
We say that $X$ satisfies \textup{(A)} if, for every $R>0$, there are
constants $C_R<\infty$ and $r_R>0$ such that
\begin{equation}\tag{A}\label{eq:A}
 \int_{B_R(p)}D_r(x)^{3/2}\,d\mu(x)\le C_Rr^3
 \qquad (0<r<r_R).
\end{equation}
The constants may depend on the space and on $R$, but are independent of
the small radius $r$.
\end{definition}

Define the volume density and the associated density-deficit sets by
\begin{equation}\label{eq:density}
 \Theta(x)=\lim_{r\rightarrow0}\frac{\mu(B_r(x))}{V(r)},
 \qquad
 S_\delta=\{x\in X:\Theta(x)\le1-\delta\},\quad 0<\delta<1.
\end{equation}
For a subset $F\subset X$ and $\rho>0$, write
\[
 \Tube_\rho(F)=\{y\in X:\dist(y,F)<\rho\}.
\]
Let $N_\rho(F)$ denote the smallest number of open balls of radius $\rho$,
with centers in $X$, required to cover $F$.

\begin{theorem}\label{thm:main}
Suppose that \eqref{eq:setting} holds, $n\ge4$, and $X$ satisfies
\textup{(A)}. Then the following conclusions hold.
\begin{enumerate}
\item For every $R>0$ and $0<\delta<1$, there are constants
$K_{R,\delta}<\infty$ and $\rho_{R,\delta}>0$ such that
\begin{align}
 N_\rho(S_\delta\cap B_R(p))&\le K_{R,\delta}\rho^{3-n},
 \label{eq:main-cover}\\
 \mu\bigl(\Tube_\rho(S_\delta\cap B_R(p))\bigr)
 &\le K_{R,\delta}\rho^3
 \label{eq:main-tube}
\end{align}
for $0<\rho<\rho_{R,\delta}$. In particular,
\begin{equation}\label{eq:main-finite}
 \HH^{n-3}(S_\delta\cap B_R(p))<\infty.
\end{equation}
\item The metric singular set satisfies
\begin{equation}\label{eq:main-dim}
 \dimH\Sing\le n-3,
\end{equation}
and $\HH^{n-3}$ is sigma-finite on $\Sing$.
\item Define the topological manifold locus by
\begin{equation}\label{eq:manifold-locus}
 \Man=\{x\in X:\text{$x$ has an open neighborhood homeomorphic to $\R^n$}\}.
\end{equation}
Then $E=X\setminus\Man$ is closed,
$\dimH E\le n-3$, and $X\setminus E$ is a topological
$n$-manifold without boundary. Moreover, $\HH^{n-3}$ is sigma-finite on $E$.
\end{enumerate}
\end{theorem}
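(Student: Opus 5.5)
The plan is to turn the integral bound (A) into a tube estimate for $S_\delta$ via a Markov-type inequality. The key pointwise fact is that the deficit cannot be small near a point of low density. Fix $x\in S_\delta$ and $y$ with $d(x,y)<\rho$. Since $D_r$ is nondecreasing in $r$, we get $D_r(x)\ge 1-\Theta(x)\ge\delta$ for every $r>0$. Bishop--Gromov monotonicity of $\mu(B_s(x))/V(s)$ then gives, for $r=C\rho$ with a large constant $C=C(\delta,n)$,
\[
 \mu(B_r(y))\le\mu(B_{r+\rho}(x))\le (1-\delta)V(r+\rho)+o(1)\cdot V(r+\rho)
\]
for small $\rho$. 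More carefully, monotonicity gives $\mu(B_{r+\rho}(x))/V(r+\rho)\le \mu(B_s(x))/V(s)$ for $s\le r+\rho$. I would instead use the fixed scale directly, $\mu(B_{r+\rho}(x))\le (1-D_{r+\rho}(x))V(r+\rho)\le(1-\delta)V(r+\rho)$. Because $V(r+\rho)/V(r)\le (1+1/C)^n(1+O(r^2))$, this yields $D_r(y)\ge\delta/2$ once $C$ is chosen with $(1-\delta)(1+1/C)^n<1-\delta/2$ and $r$ is small. Hence $\Tube_\rho(S_\delta\cap B_R(p))\subset\{y\in B_{R+1}(p):D_{C\rho}(y)\ge\delta/2\}$. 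Chebyshev together with (A) on $B_{R+1}(p)$ then gives
\[
 \mu(\Tube_\rho(S_\delta\cap B_R(p)))\le (2/\delta)^{3/2}C_{R+1}C^3\rho^3,
\]
which is \eqref{eq:main-tube}.

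For \eqref{eq:main-cover}, take a maximal $\rho$-separated subset of $S_\delta\cap B_R(p)$. The balls of radius $\rho/2$ around its points are disjoint and lie in the tube. Each such ball has $\mu$-measure at least $c(n,v,R)\rho^n$ by noncollapsing and Bishop--Gromov, so the number of points is at most $K\rho^{3-n}$. The balls of radius $\rho$ around them cover the set. The covering bound gives $\HH^{n-3}_{2\rho}(S_\delta\cap B_R(p))\le K'$ for every small $\rho$, and letting $\rho\to0$ yields \eqref{eq:main-finite}.

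For (ii), I would invoke Cheeger--Colding volume-cone rigidity: $x\in\Reg$ if and only if $\Theta(x)=1$, since density one forces every tangent cone to be a metric cone of full Euclidean volume, hence $\R^n$. Therefore $\Sing=\bigcup_{k,m}S_{1/k}\cap B_m(p)$, a countable union of sets of finite $\HH^{n-3}$ measure. This gives sigma-finiteness and $\dimH\Sing\le n-3$.

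For (iii), $\Man$ is open by definition, so $E$ is closed, and $X\setminus E=\Man$ is a topological $n$-manifold. To bound $E$ we need $E\subset\Sing$, or at least $E\subset S_{\delta}$ for some $\delta$, locally. Here the main input is Cheeger--Colding's Reifenberg theorem: there is $\varepsilon(n)>0$ such that every point with $\Theta(x)>1-\varepsilon(n)$ has a neighborhood bi-Hölder homeomorphic to a Euclidean ball. Hence $E\subset S_{\varepsilon(n)}$, and the estimates from (i) and (ii) transfer to $E$.

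The main obstacle is the pointwise step that passes from the deficit at $x$ to the deficit at nearby $y$ at a comparable scale, which requires handling the hyperbolic normalization uniformly. The rest reduces to quoting Cheeger--Colding rigidity and the Reifenberg theorem correctly in the noncollapsed setting.
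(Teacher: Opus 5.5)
Your proposal is correct, and its core coincides with the paper's proof. Your pointwise step is Lemma~\ref{lem:propagation} with $\eta=1/C$. The tube bound is the same Chebyshev estimate against (A) on $B_{R+1}(p)$. Part (ii) uses the same decomposition $\Sing=\bigcup_m S_{1/m}$ together with density rigidity.

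Two differences are worth recording. First, for \eqref{eq:main-cover} you prove the tube bound and then pack disjoint $\rho/2$-balls into the tube. The paper instead integrates $D_r^{3/2}$ directly over a maximal disjoint family $\{B_{\eta r}(x_j)\}$; this is the same mechanism in the other order.

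Second, and more substantively, part (iii). The paper uses only the qualitative input $\Reg\subset\Man$, which gives $E\subset\Sing$ and hence only sigma-finiteness of $\HH^{n-3}$ on $E$. You use the quantitative $\varepsilon$-regularity form of the Reifenberg theorem: $\Theta(x)>1-\varepsilon(n)$ forces a Euclidean neighborhood, via almost-maximal volume at a small scale where the rescaled curvature bound is almost nonnegative. This gives $E\subset S_{\varepsilon(n)}$. It asks slightly more of Cheeger--Colding theory, but it yields more than the theorem records. Part (i) then applies to $E$ with $\delta=\varepsilon(n)$, so $E\cap B_R(p)$ satisfies the covering and tube bounds, and $\HH^{n-3}$ is locally finite, not merely sigma-finite, on $E$.
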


The additional hypothesis \textup{(A)} does not follow from the lower
Ricci curvature bound and noncollapsing alone, as the cone example in
Section~\ref{sec:examples} shows. Theorem~\ref{thm:main} therefore
establishes the codimension-three manifold conclusion under this
additional hypothesis. In fact, it gives the stronger conclusion that
the \emph{metric} singular set has Hausdorff codimension at least three.

\begin{remark}\label{rem:distinctions}
The sets $S_\delta$ are defined in terms of volume density and should be
distinguished from the strata defined by Euclidean splitting of tangent cones. The constants in
\eqref{eq:main-cover}--\eqref{eq:main-tube} may deteriorate as
$\delta\rightarrow0$. Thus the theorem does not yield a uniform tubular-volume
estimate or local finiteness of $\HH^{n-3}$ on the entire set $\Sing$.
The dimension bound \eqref{eq:main-dim} applies to $\Sing$ itself,
not necessarily to its closure.
\end{remark}

\section{Volume comparison and density deficits}\label{sec:prelim}

We recall the consequences of Ricci limit theory needed in the proof. The space $X$ is proper and separable, $\mu$ is finite on bounded
sets and has full support, and
\begin{equation}\label{eq:BG}
	\frac{\mu(B_s(x))}{V(s)}\ge
	\frac{\mu(B_t(x))}{V(t)}\qquad(0<s\le t).
\end{equation}
The upper volume bound and density rigidity hold in the noncollapsed
setting; see \cite{CC1997I,DePhilippisGigli2018}.

\begin{lemma}[Local lower volume bound]\label{lem:lower}
	For every $L>0$ there is $c_L>0$ such that
	\begin{equation}\label{eq:lower}
		\mu(B_t(z))\ge c_Lt^n
		\qquad (z\in B_L(p),\ 0<t\le1).
	\end{equation}
	There is also a constant $b_n<\infty$ such that
	\begin{equation}\label{eq:upper}
		\mu(B_t(z))\le V(t)\le b_nt^n\qquad(0<t\le1).
	\end{equation}
\end{lemma}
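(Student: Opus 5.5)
The lower bound \eqref{eq:lower} will come from the noncollapsing assumption in \eqref{eq:setting} together with Bishop--Gromov monotonicity \eqref{eq:BG}, transported to points away from $p$ by a ball inclusion. The upper bound \eqref{eq:upper} is the limiting form of Bishop--Gromov at infinitesimal scale, i.e.\ $\Theta\le1$, together with an elementary estimate on $\sinh$.

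For the lower bound, fix $L>0$ and $z\in B_L(p)$. Since $B_1(p)\subset B_{L+1}(z)$, volume convergence gives $\mu(B_{L+1}(z))\ge\mu(B_1(p))\ge v$. Here $\mu(B_1(p))\ge v$ follows from $\Vol_{g_i}(B_1(p_i))\ge v$ and volume convergence \cite{Colding1997}; if one worries about open versus closed balls, one can pass through slightly larger radii and use continuity of $r\mapsto\mu(B_r)$ from below. Applying \eqref{eq:BG} with $s=t\le1$ and $t$ replaced by $L+1$ gives
\[
\mu(B_t(z))\ge \frac{V(t)}{V(L+1)}\,\mu(B_{L+1}(z))\ge \frac{v\,V(t)}{V(L+1)}.
\]
Since $\sinh u\ge u$, we have $V(t)\ge\omega_nt^n$. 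So $c_L=v\,\omega_n/V(L+1)$ works.

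For the upper bound, the inequality $\mu(B_t(z))\le V(t)$ follows by letting $s\to0$ in \eqref{eq:BG}. This gives $\mu(B_t(z))/V(t)\le\Theta(z)$, and it remains to show $\Theta(z)\le1$. To see this, note that on each $M_i$ relative volume comparison gives $\Vol(B_t(x_i))\le V(t)$, since the ratio tends to $1$ as $t\to0$ on a smooth manifold. Volume convergence \cite{Colding1997,CC1997I} then passes this to the limit: choose $x_i\to z$, so that $\mu(B_t(z))=\lim\Vol(B_t(x_i))\le V(t)$. Finally, for $0<u\le1$ we have $\sinh u\le u\cosh 1$, hence $V(t)\le\omega_n(\cosh1)^{n-1}t^n$, and $b_n=\omega_n(\cosh 1)^{n-1}$ works.

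There is no serious obstacle here. The only care needed is in the volume-convergence step, namely open balls and the passage of Bishop--Gromov and $\Vol_{g_i}(B_1(p_i))\ge v$ to the limit. Both are standard consequences of Colding's volume convergence, which the paper has already invoked.
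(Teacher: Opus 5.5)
Your proof is correct and follows the paper's argument. The lower bound uses the same steps: the inclusion $B_1(p)\subset B_{L+1}(z)$, Bishop--Gromov, and $V(t)\ge\omega_n t^n$. The upper bound uses volume comparison as the paper does, except that you make the constant explicit ($b_n=\omega_n(\cosh 1)^{n-1}$) rather than citing continuity of $V(t)/t^n$ on $[0,1]$. Note that the detour through $\Theta(z)\le 1$ is unnecessary: your argument with $\Vol(B_t(x_i))\le V(t)$ and volume convergence along $x_i\to z$ already gives $\mu(B_t(z))\le V(t)$ directly.
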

\begin{proof}
	Volume convergence gives $\mu(B_1(p))\ge v$. If $z\in B_L(p)$, then
	$B_1(p)\subset B_{L+1}(z)$. By \eqref{eq:BG},
	\[
	\mu(B_t(z))\ge
	\mu(B_{L+1}(z))\frac{V(t)}{V(L+1)}
	\ge\frac{v\omega_n}{V(L+1)}t^n.
	\]
	Here $t\le1<L+1$ and $V(t)\ge\omega_nt^n$. This proves
	\eqref{eq:lower}; the upper bound follows from volume comparison and the
	continuous extension of $V(t)/t^n$ to $[0,1]$.
\end{proof}

\begin{lemma}[Density and the regular set]\label{lem:density}
	The density in \eqref{eq:density} exists at every point and satisfies
	\begin{equation}\label{eq:theta-properties}
		0<\Theta(x)\le1,\qquad
		\Theta(x)=\lim_{r\rightarrow0}\frac{\mu(B_r(x))}{\omega_nr^n},
		\qquad \Reg=\{x:\Theta(x)=1\}.
	\end{equation}
	Furthermore, $\Theta$ is lower semicontinuous, each $S_\delta$ is
	closed, and
	\begin{equation}\label{eq:sing-union}
		\Sing=\bigcup_{m=2}^{\infty}S_{1/m}.
	\end{equation}
	For $x\in S_\delta$ one has
	\begin{equation}\label{eq:persistent}
		\mu(B_t(x))\le(1-\delta)V(t)\qquad(t>0).
	\end{equation}
\end{lemma}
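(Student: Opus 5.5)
The plan is to derive each assertion of Lemma~\ref{lem:density} from Bishop--Gromov monotonicity \eqref{eq:BG} together with the cited volume-cone and density-rigidity results of Cheeger--Colding and De~Philippis--Gigli.

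\emph{Existence and range of $\Theta$.} By \eqref{eq:BG}, the ratio $r\mapsto\mu(B_r(x))/V(r)$ is nonincreasing, so its limit as $r\rightarrow0$ exists in $(0,\infty]$. Finiteness and $\Theta\le1$ come from the upper bound \eqref{eq:upper} in the sharp form $\mu(B_r(x))\le V(r)$. Positivity comes from \eqref{eq:lower} with $L>d(p,x)$, since $V(r)\sim\omega_nr^n$. Because $V(r)/(\omega_nr^n)\rightarrow1$ as $r\rightarrow0$, the two normalizations of the density agree.

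\emph{The persistence bound \eqref{eq:persistent}.} Monotonicity gives $\mu(B_t(x))/V(t)\le\lim_{s\rightarrow0}\mu(B_s(x))/V(s)=\Theta(x)\le1-\delta$ for every $t>0$.

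\emph{Lower semicontinuity of $\Theta$ and closedness of $S_\delta$.} Take $x_j\rightarrow x$ and fix $r>0$. For every $\epsilon>0$, once $d(x_j,x)<\epsilon$ we have $B_r(x_j)\supset B_{r-\epsilon}(x)$, and therefore
\[
\Theta(x_j)\ge\frac{\mu(B_r(x_j))}{V(r)}\ge\frac{\mu(B_{r-\epsilon}(x))}{V(r)}.
\]
This yields $\liminf_j\Theta(x_j)\ge\mu(B_{r-\epsilon}(x))/V(r)$. Letting $\epsilon\rightarrow0$ (using $\mu(B_{r-\epsilon}(x))\uparrow\mu(B_r(x))$) and then $r\rightarrow0$ gives $\liminf_j\Theta(x_j)\ge\Theta(x)$. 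Hence $S_\delta=\{\Theta\le1-\delta\}$ is a sublevel set of a lower semicontinuous function and is closed.

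\emph{Characterization $\Reg=\{\Theta=1\}$.} This is the substantive step, and I would quote it rather than reprove it. Every tangent cone $C$ at $x$ is a noncollapsed limit; by volume convergence its unit-ball volume is $\omega_n\Theta(x)$, and it is a metric cone by Cheeger--Colding. If $C=\R^n$, then $\Theta(x)=1$. Conversely, if $\Theta(x)=1$, the tangent cone is a noncollapsed Ricci limit with Euclidean volume ratio at its vertex, so volume rigidity (almost-maximal volume implies Reifenberg/Euclidean, as in \cite{Colding1997,CC1997I,DePhilippisGigli2018}) forces $C=\R^n$. The main obstacle is only to cite the correct rigidity statement and to make sure the volume of $B_1$ in the cone equals $\omega_n\Theta(x)$, which uses volume convergence along the rescaled sequence.

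\emph{The union formula \eqref{eq:sing-union}.} This follows directly from the characterization: $x\in\Sing$ if and only if $\Theta(x)<1$, which holds if and only if $\Theta(x)\le1-1/m$ for some $m\ge2$.
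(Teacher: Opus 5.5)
Your proposal is correct and follows the paper's proof essentially step by step: monotonicity from \eqref{eq:BG} for the existence of $\Theta$ and for \eqref{eq:persistent}, the bounds of Lemma~\ref{lem:lower} for $0<\Theta\le1$, and a citation of volume convergence plus maximal-volume rigidity for $\Reg=\{\Theta=1\}$, from which \eqref{eq:sing-union} is immediate. The only variation is in the semicontinuity step, where you use just the inclusion $B_{r-\epsilon}(x)\subset B_r(x_j)$ and continuity of $\mu$ from below; this suffices, and it avoids the paper's extra argument that spheres are $\mu$-null and that $x\mapsto\mu(B_r(x))$ is fully continuous.
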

\begin{proof}
	The limit exists by \eqref{eq:BG}. Its positivity follows from
	Lemma~\ref{lem:lower}, and its upper bound follows from noncollapsed
	volume comparison. The two expressions for the density agree because
	$V(r)/(\omega_nr^n)\to1$.
	
	We recall the relation between volume density and metric regularity.
	Along any blow-up sequence converging to a tangent cone, volume
	convergence shows that every ball of radius $s$ centered at the vertex
	has volume
	$\Theta(x)\omega_ns^n$. If $\Theta(x)=1$, maximal-volume rigidity
	identifies this tangent cone with $\R^n$. The argument applies to every
	tangent cone. Conversely, if $x$ is regular, volume convergence to a
	Euclidean tangent gives $\Theta(x)=1$. These are standard
	consequences of noncollapsed volume rigidity; see \cite{CC1996,CC1997I,DePhilippisGigli2018}.
	
	For completeness, we verify that, for each fixed radius, the ball volume
	is a continuous function of the center. First, \eqref{eq:BG} gives
	\[
	\mu(B_{t+h}(x))\le\frac{V(t+h)}{V(t)}\mu(B_t(x)).
	\]
	Letting $h\rightarrow0$ proves that every metric sphere has zero
	$\mu$-measure. If $x_j\to x$, the inclusions
	$B_{t-h}(x)\subset B_t(x_j)\subset B_{t+h}(x)$ for large $j$, followed
	by $h\rightarrow0$, prove continuity. Thus
	\[
	\Theta(x)=\sup_{k\ge1}\frac{\mu(B_{1/k}(x))}{V(1/k)}
	\]
	is lower semicontinuous. Hence $S_\delta$ is closed.
	Equation~\eqref{eq:sing-union} follows from
	\eqref{eq:theta-properties}, and \eqref{eq:persistent} follows from
	\eqref{eq:BG} by letting the smaller radius tend to zero.
\end{proof}

\begin{lemma}[Propagation of density deficits to nearby centers]\label{lem:propagation}
	For every $0<\delta<1$ there is $\eta=\eta(n,\delta)\in(0,1/4)$
	such that, for $x\in S_\delta$ and $0<r\le1$,
	\begin{equation}\label{eq:propagation}
		D_r(y)\ge\frac\delta2\qquad(y\in B_{\eta r}(x)).
	\end{equation}
\end{lemma}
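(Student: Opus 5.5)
The idea is to compare the ball around $y$ with a slightly larger ball around $x$, where Lemma~\ref{lem:density} already gives a persistent deficit. Fix $x\in S_\delta$, $0<r\le1$, and $y\in B_{\eta r}(x)$, with $\eta$ to be chosen depending only on $n$ and $\delta$. By the triangle inequality, $B_r(y)\subset B_{(1+\eta)r}(x)$. Applying \eqref{eq:persistent} at radius $t=(1+\eta)r$ gives
\[
\mu(B_r(y))\le\mu(B_{(1+\eta)r}(x))\le(1-\delta)V((1+\eta)r).
\]
Dividing by $V(r)$,
\[
1-D_r(y)=\frac{\mu(B_r(y))}{V(r)}\le(1-\delta)\frac{V((1+\eta)r)}{V(r)}.
\]
So it suffices to choose $\eta$ with $(1-\delta)\,V((1+\eta)r)/V(r)\le1-\delta/2$ for all $0<r\le1$.

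The main step is a uniform bound on the ratio $V((1+\eta)r)/V(r)$ for $r\le1$. The substitution $t=(1+\eta)s$ gives
\[
V((1+\eta)r)=n\omega_n(1+\eta)\int_0^r\bigl(\sinh((1+\eta)s)\bigr)^{n-1}ds.
\]
Since $\sinh$ is convex on $[0,\infty)$ and vanishes at $0$, the function $\sinh(u)/u$ is increasing. Hence for $0<s\le r\le1$,
\[
\frac{\sinh((1+\eta)s)}{\sinh s}\le(1+\eta)\,\frac{\sinh((1+\eta)s)/((1+\eta)s)}{\sinh s/s}\le(1+\eta)\,\frac{\sinh(2)/2}{1}.
\]
This is uniform but not close to $1$, so I would instead use a cruder estimate that does tend to $1$. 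I would bound $\sinh((1+\eta)s)\le\sinh(s)\cosh(\eta s)+\cosh(s)\sinh(\eta s)$ and note $\sinh(\eta s)\le\eta s\cosh(\eta s)\le\eta\cosh(1)\,\sinh(s)\cdot(s/\sinh s)\le \eta\cosh(1)\sinh s$. Together with $\cosh(\eta s)\le\cosh 1$, this gives
\[
\sinh((1+\eta)s)\le\sinh(s)\bigl(\cosh(\eta)+\eta\cosh^2(1)\bigr),
\]
using $\cosh(\eta s)\le\cosh\eta$. Therefore
\[
\frac{V((1+\eta)r)}{V(r)}\le(1+\eta)\bigl(\cosh\eta+\eta\cosh^21\bigr)^{n-1}=:\phi_n(\eta),
\]
and $\phi_n(\eta)\to1$ as $\eta\to0$, uniformly in $r\le1$.

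Finally, choose $\eta\in(0,1/4)$ so small that $\phi_n(\eta)\le(1-\delta/2)/(1-\delta)$; this is possible because the right-hand side exceeds $1$. Then $1-D_r(y)\le1-\delta/2$, which is \eqref{eq:propagation}. The only delicate point is making the ratio bound uniform for $r\le1$, which is where the restriction $r\le1$ enters. It is handled by the elementary $\sinh$ estimate above; alternatively one can argue by continuity and compactness of $(r,\eta)\in[0,1]\times[0,1/4]$, using the continuous extension of $V(t)/t^n$ at $0$.
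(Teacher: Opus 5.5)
Your proof is correct and is essentially the paper's argument: you use the inclusion $B_r(y)\subset B_{(1+\eta)r}(x)$, the persistent deficit at radius $(1+\eta)r$, and a bound on $V((1+\eta)r)/V(r)$ that is uniform in $r\le1$. The only differences are that you prove the ratio bound with an explicit $\sinh$ estimate rather than the paper's continuity and uniform-convergence argument (there the factor to bound is $\cosh s\le\cosh 1$, not $\cosh(\eta s)$), and that you use the sharp threshold $(1-\delta/2)/(1-\delta)$ instead of the paper's $1+\delta/2$.
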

\begin{proof}
	The ratio $V((1+\eta)r)/V(r)$ extends continuously to $r=0$,
	with value $(1+\eta)^n$, and converges uniformly to $1$ on $[0,1]$
	as $\eta\rightarrow0$. We may therefore choose $\eta\in(0,1/4)$
	sufficiently small that
	\begin{equation}\label{eq:model-ratio}
		V((1+\eta)r)\le(1+\delta/2)V(r)\qquad(0<r\le1).
	\end{equation}
	For $y\in B_{\eta r}(x)$,
	$B_r(y)\subset B_{(1+\eta)r}(x)$, so
	\[
	\frac{\mu(B_r(y))}{V(r)}
	\le(1-\delta)\frac{V((1+\eta)r)}{V(r)}
	\le(1-\delta)(1+\delta/2)\le1-\delta/2.
	\]
	This proves \eqref{eq:propagation}.
\end{proof}

\section{Proof of the main theorem}\label{sec:proof}

Fix $R>0$ and $0<\delta<1$, and set $F=S_\delta\cap B_R(p)$.
Let $\eta$ be as in Lemma~\ref{lem:propagation}. All radii considered
below are chosen sufficiently small for \textup{(A)} to apply on
$B_{R+1}(p)$.

\subsection{Packing and Hausdorff measure}
For $0<r<\min\{1,r_{R+1}\}$, choose a maximal family of pairwise disjoint balls
\[
\{B_{\eta r}(x_j)\}_{j=1}^{N},\qquad x_j\in F.
\]
This family is finite: its balls lie in $B_{R+1}(p)$, their volumes
are bounded below by a uniform positive constant by Lemma~\ref{lem:lower},
and their total volume is at most $\mu(B_{R+1}(p))$. By maximality,
\begin{equation}\label{eq:cover}
	F\subset\bigcup_{j=1}^{N}B_{2\eta r}(x_j).
\end{equation}
Indeed, if a point of $F$ were not covered, its ball of radius $\eta r$
could be added to the disjoint family.

By \textup{(A)} and Lemmas~\ref{lem:propagation} and~\ref{lem:lower},
\begin{align}
	C_{R+1}r^3
	&\ge\int_{B_{R+1}(p)}D_r(y)^{3/2}\,d\mu(y)\notag\\
	&\ge\sum_{j=1}^{N}\int_{B_{\eta r}(x_j)}D_r(y)^{3/2}\,d\mu(y)
	\notag\\
	&\ge N(\delta/2)^{3/2}c_{R+1}\eta^nr^n.
	\label{eq:packing-calculation}
\end{align}
Consequently,
\begin{equation}\label{eq:packing}
	N\le A_{R,\delta}r^{3-n},\qquad
	A_{R,\delta}=
	\frac{C_{R+1}}{(\delta/2)^{3/2}c_{R+1}\eta^n}.
\end{equation}
Taking $\rho=2\eta r$ in \eqref{eq:cover}--\eqref{eq:packing}
yields \eqref{eq:main-cover}, after adjusting the constant.

The same covering satisfies
\[
\sum_{j=1}^{N}\bigl(\diam B_{2\eta r}(x_j)\bigr)^{n-3}
\le A_{R,\delta}(4\eta)^{n-3}.
\]
Letting $r\rightarrow0$ proves \eqref{eq:main-finite}, with the fixed
normalization factor in Hausdorff measure absorbed into the constant. For every $s>n-3$,
\[
\sum_{j=1}^{N}\bigl(\diam B_{2\eta r}(x_j)\bigr)^s
\le A_{R,\delta}(4\eta)^sr^{s+3-n}\longrightarrow0.
\]
Thus $\dimH F\le n-3$. Taking the union over positive integers $R$
and using \eqref{eq:sing-union}, we obtain \eqref{eq:main-dim}
and the sigma-finiteness of $\HH^{n-3}$ on $\Sing$.

\subsection{Tubular volume}
We prove \eqref{eq:main-tube} directly. Set
$r=\rho/\eta$. If $\rho$ is sufficiently small, then
\[
\Tube_\rho(F)\subset B_{R+1}(p),\qquad 0<r<\min\{1,r_{R+1}\}.
\]
Each point of $\Tube_\rho(F)$ lies in a ball $B_{\eta r}(x)$ with
$x\in S_\delta$. By Lemma~\ref{lem:propagation},
\begin{align*}
	(\delta/2)^{3/2}\mu(\Tube_\rho(F))
	&\le\int_{\Tube_\rho(F)}D_{\rho/\eta}(y)^{3/2}\,d\mu(y)\\
	&\le C_{R+1}(\rho/\eta)^3.
\end{align*}
This gives the asserted bound.

\subsection{The closed topological exceptional set}
We use the following standard consequence of the metric Reifenberg
theorem of Cheeger--Colding \cite[Appendix~1]{CC1997I}:
\begin{equation}\label{eq:topological-input}
	\text{every }x\in\Reg\text{ has an open neighborhood in }X
	\text{ homeomorphic to an open subset of }\R^n.
\end{equation}
This local formulation is recalled in the introduction of
\cite{BruePigatiSemola2024+}. The neighborhoods in
\eqref{eq:topological-input} are open in $X$, rather than merely
in the relative topology of $\Reg$.

The locus $\Man$ defined in \eqref{eq:manifold-locus} is open: every
point in a Euclidean chart has a smaller neighborhood homeomorphic to
$\R^n$. It contains $\Reg$ by \eqref{eq:topological-input}. Therefore
\[
E=X\setminus\Man\text{ is closed},\qquad E\subset\Sing.
\]
The dimension bound and sigma-finiteness follow from the corresponding
properties of $\Sing$. Moreover, $\Man$ is Hausdorff and second
countable, being an open subset of the separable metric space $X$.
Together with the local charts, these properties show that $\Man$ is
a topological $n$-manifold without boundary. This completes the proof of
Theorem~\ref{thm:main}.\qed

\section{Comparison with Uniform almost maximal volume}\label{sec:examples}

Fix $0<\varepsilon<1$. We consider the following uniform almost
maximal volume condition on bounded sets: for every $R>0$, there is
$s_R>0$ such that
\begin{equation}\tag{AMV$_\varepsilon$}\label{eq:AMV}
	D_r(x)\le\varepsilon
	\qquad(x\in B_R(p),\ 0<r<s_R).
\end{equation}
Here $\varepsilon$ is independent of $r$, while the admissible radius
may depend on $R$. By \eqref{eq:BG}, a bound at a fixed radius implies
the same bound at all smaller radii with the same centers. The following
example shows that this condition does not imply \textup{(A)}.

\begin{example}[Almost maximal volume without \textup{(A)}]\label{ex:cone}
	Let $0<\alpha<1$, let $C_\alpha$ be the two-dimensional Euclidean cone
	of total angle $2\pi\alpha$, and set
	\[
	X_\alpha=\R^{n-2}\times C_\alpha,
	\qquad S=\R^{n-2}\times\{o\}.
	\]
	The metric on $C_\alpha\setminus\{o\}$ is
	$ds^2+\alpha^2s^2d\theta^2$, with $0\le\theta<2\pi$.
	Smoothing the cone tip yields noncollapsed smooth approximations with
	nonnegative sectional curvature; see \cite[Example~2.14]{CheegerNaber2015}.
	
	The coordinate map $F:\R^n\to X_\alpha$, which preserves the
	Euclidean and polar coordinates, is a bijective $1$-Lipschitz map:
	the target metric contracts angular lengths by the factor $\alpha$.
	Away from the axis, its volume Jacobian is $\alpha$.
	For any $x=F(\widetilde x)$,
	$F(B_r(\widetilde x))\subset B_r(x)$, and hence
	\begin{equation}\label{eq:cone-lower}
		\mu(B_r(x))\ge\alpha\omega_nr^n.
	\end{equation}
	Therefore
	\[
	D_r(x)\le1-\alpha\frac{\omega_nr^n}{V(r)}
	\le(1-\alpha)+O_n(r^2).
	\]
	Given $0<\varepsilon<1$, choose $\alpha=1-\varepsilon/2$ and then
	choose $s_\varepsilon>0$ sufficiently small. It follows that $X_\alpha$ satisfies
	\eqref{eq:AMV} at all centers for $0<r<s_\varepsilon$.
	
	On the other hand, $\Theta(z)=\alpha$ for every $z\in S$. Set
	$\delta=1-\alpha$ and choose $\eta>0$ such that
	$\alpha(1+\eta)^n\le1-\delta/2$. If $\dist(y,S)<\eta r$ and $z$
	is its nearest point on $S$, then
	\[
	\mu(B_r(y))\le\mu(B_{(1+\eta)r}(z))
	=\alpha\omega_n(1+\eta)^nr^n
	\le(1-\delta/2)V(r).
	\]
	Thus $D_r(y)\ge\delta/2$ throughout this tube. Write $p=(u_0,o)\in S$.
	For all sufficiently small $r$, the product
	$B_{R/2}^{\R^{n-2}}(u_0)\times B_{\eta r}^{C_\alpha}(o)$ is contained
	in $B_R(p)$ and has volume
	$\omega_{n-2}(R/2)^{n-2}\pi\alpha\eta^2r^2$. It follows that
	\[
	\int_{B_R(p)}D_r^{3/2}\,d\mu
	\ge c_{n,R,\alpha}r^2.
	\]
	Since $r^2$ is not $O(r^3)$ as $r\rightarrow0$, condition \textup{(A)}
	fails. The space $X_\alpha$ is homeomorphic to $\R^n$, so
	\textup{(A)} is not necessary for topological manifold regularity.
\end{example}

\bibliographystyle{plain-no-oxford}
\bibliography{reference}

@article{Naber2020,
	title={Conjectures and open questions on the structure and regularity of
	spaces with lower Ricci curvature bounds},
	author={Naber, A. },
	journal={Symmetry, Integrability and Geometry: Methods and Applications },
	volume={103},
	number={16},
	pages={1-8},
	year={2020},
}

@book{Cheeger2001,
	title={Degeneration of Riemannian metrics under Ricci curvature bounds},
	author={Cheeger, J.},
	note={Classe di Science},
	publisher ={Accademia nazionale dei Lincei},
	address = {Scuola Normale superiore},
	year={2001},
}

@Article{CheegerColdingTian2002,
	author={J. Cheeger and T,H. Colding and G. Tian},
	title={On the singularities of spaces with bounded Ricci cur-
	vature},
	journal={Geom. Funct. Anal.},
	year="2002",
	volume="12",
	number={5},
	pages="873?914",
}

@Article{DePhilippisGigli2018,
	author={G. De Philippis and N. Gigli},
	title={Non-collapsed spaces with Ricci curvature bounded from below},
	journal={J. \`Ec. polytech. Math.},
	year="2018",
	volume=" ",
	number={5},
	pages="613--650",
}

@unpublished{BruePigatiSemola2024+,
	author       = {E. Bru\`e and A. Pigati and D.Semola},
	title        = {Topological regularity and stability of noncollapsed spaces with
	Ricci curvature bounded below},
	note         = {arXiv:2405.03839},
	month        = Mar,
	year         = 2024
}

@Article{CheegerNaber2015,
	author="J. Cheeger and A. Naber",
	title="Regularity of Einstein manifolds and the codimension 44 conjecture",
	journal="Ann. Math.",
	year="2015",
	volume="182",
	pages="1093--1165",
	doi="10.2140/gt.2016.20.2575",
	issue="3",
}

@article{CC1996,
	title={Lower Bounds on {R}icci Curvature and the Almost Rigidity of Warped Products},
	author={Cheeger, J. and Colding, T. H.},
	journal={Ann. of Math.},
	volume={144},
	number={1},
	pages={189-237},
	year={1996},
}

@article{Colding1997,
	title={Ricci Curvature and Volume Convergence},
author={Colding, T. H.},
journal={Ann. of Math.},
volume={145},
number={3},
pages={477--501},
year={1997},
}

@article{CC1997I,
	title={On the Structure Of Spaces with {R}icci Curvature Bounded Below. {I}},
author={Cheeger, J. and Colding, T. H.},
journal={J. Differential Geom.},
volume={46},
number={3},
pages={406--480},
year={1997},
}
\end{document}